\documentclass{article}
\usepackage[utf8]{inputenc}  

\usepackage[english]{babel}
\usepackage{csquotes}

\usepackage{biblatex}
\addbibresource{main.bib}

\usepackage{arxiv}

\usepackage[T1]{fontenc}    
\usepackage{url}                 
\usepackage{booktabs}        
\usepackage{amsfonts}        
\usepackage{nicefrac}          
\usepackage{microtype}       

\usepackage{amsthm} 
\usepackage{amsmath}


\usepackage{pgfplots}
\pgfplotsset{width=8cm,compat=1.9}
\usepgfplotslibrary{fillbetween}
\usepackage{tikz}
\usetikzlibrary{backgrounds}


\newcommand{\R}{\mathbb{R}}
\newcommand{\adj}{\operatorname{adj}}
\renewcommand{\det}{\operatorname{det}}
\newcommand{\vol}{\operatorname{vol}}

\newcommand{\rank}{\operatorname{rank}}

\newcommand{\trace}{\operatorname{trace}}
\newtheorem{lemma}[]{Lemma}
\newtheorem*{lemma*}{Old Lemma}
\newtheorem{theorem}{Theorem}
\newtheorem*{theorem*}{Old Theorem}
\newtheorem{corollary}{Corollary}

\newtheorem{definition}{Definition}

\title{A Note on  Error Bounds  for Pseudo Skeleton Approximations of Matrices}

\author{Frank de~Hoog \\
  Data 61 \\ CSIRO \\
  Black Mountain, ACT 2601 \\
  \texttt{Frank.deHoog@csiro.au} \\
  \And
  Markus Hegland \\ Mathematical Sciences Institute \\
  Australian National University \\
  Acton, ACT 2601 \\
  \texttt{Markus.Hegland@anu.edu.au}}
  
\date{}

\begin{document}
\maketitle

\begin{abstract}
  A new and improved bound for the Chebyshev norm of the error of a maximal volume pseudo-skeleton matrix approximation is presented. This bound uses all the singular values of the approximated matrix up to the $r+1$st where $r$ is the rank of the approximation. It is shown that this bound is in particular useful for the case of matrices with slowly decaying singular values where the approximation rank is of moderate size. In this case the original bounds for the maximal volume approximation are of the form $(r+1)\sigma_{r+1}$ and thus are very large and if $\sigma_{r+1}=O(1/(r+1))$ do not even indicate convergence.
  
  A new approximation based on the truncated singular value decomposition of a maximal volume approximation is analysed. It is demonstrated that this approximation is in some cases asymptotically as a good as the truncated singular value approximation of the original matrix. This new approximation is shown to have similar approximation properties as the maximal $r$-volume method analysed by Osinsky and Zamarashkin [Linear Algebra and its Applications 537 (2018), 221--249] for the case of symmetric positive definite matrices.
\end{abstract}


\section{Introduction}

A large matrix $M\in\R^{m\times n}$ can be well approximated by a matrix of rank $r$ if the singular values $\sigma_k$ of $M$ decay rapidly or if $M$ has a low effective dimension~\cite{RoyV07}. A natural approximation $M_r$ uses the \emph{truncated singular value decomposition of $M$} which is obtained from the singular value decomposition of $M$ by replacing the nonzero singular values $\sigma_{r+1},\ldots,\sigma_{\rank(M)}$ of $M$ by zero. It is assumed that the singular values are ordered by decreasing size. A challenge of using this approximation in practice are the high computational costs and the difficulty in interpreting the spaces spanned by the first $r$ singular vectors in terms of the columns and rows of $M$, respectively.

The approximation $M_r$ has be shown to be optimal in the Schatten norms (which includes the Frobenius and spectral norms, see Eckart-Young-Mirski theorem~\cite{Dax10}). We will denote the spectral norm of a matrix $M$ by $\lVert M \rVert_2$ and the error bound is an equality 
\[ \lVert M_r - M \rVert_2 = \sigma_{r+1}.\]
Here we are interested in how well the matrix elements are approximated and thus use the Chebyshev norm defined by
\[\|M\|_C = \max_{i,j} |M_{i,j}|.\]
It follows from this definition of the Chebyshev norm and the characterisation of the spectral norm as the maximal value of $(u,Mv)$ over all $u,v$ with $\lVert u \rVert = \lVert v \rVert = 1$ that $\lVert M \rVert_C \leq \lVert M \rVert_2$ and thus
\[\lVert M_r - M\rVert_C \leq \sigma_{r+1}.\]
This bound is tight and it is an equality for diagonal matrices. The simple example
\[M = \frac{1}{2} \begin{bmatrix}1&1\\1&-1\end{bmatrix}
    \begin{bmatrix}\sigma_1&0\\0&\sigma_2\end{bmatrix}
    \begin{bmatrix}1&1\\1&-1\end{bmatrix}\]
satisfies $\lVert M_1 - M\rVert_C = \sigma_2/2$ so the bound using the spectral norm $\lVert M_1 - M\rVert_2 = \sigma_2$ substantially overestimates the Chebyshev norm of the error. 


The \emph{pseudo skeleton or CUR approximation} defined below often leads methods which are computationally efficient and accurate.
\begin{definition}[Pseudo skeleton approximation]\label{def:PSA}
Let $M\in\R^{m,n}$ be a real matrix satisfying
  $$P_1 M P_2^T = \begin{bmatrix} A & B \\ C & D \end{bmatrix}$$
for some permutation matrices $P_1\in\R^{m,m}$ and $P_2\in\R^{n,n}$ and $A\in\R^{p,q}$. Furthermore, let $A^+$ be the generalised (Moore-Penrose) inverse of $A$. Then
\begin{itemize}
    \item the matrix 
  $$\widehat{M} = P_1^T \begin{bmatrix} A \\ C\end{bmatrix} A^+ \begin{bmatrix} A & B\end{bmatrix}P_2$$
is called a \emph{pseudo skeleton (or CUR) approximation of $M$} and
\item the resulting approximation is called a \emph{maximal volume approximation} if the permutations $P_1$ and $P_2$ are chosen such that 
  \begin{itemize}
      \item $\rank(A)=r$ is maximal
      \item the $r$-volume $\vol_r(A)=\sigma_1\cdots \sigma_r$ is maximal.
  \end{itemize}
\end{itemize}
\end{definition}

We will assume $p\leq q$ in the following sections and let $\widehat{M}$ always be the maximal volume pseudo skeleton approximation of $M\in\R^{n,m}$ with parameters $p, q$. Furthermore we will assume that $M$ has been reordered such that 
\[M = \begin{bmatrix}A & B \\ C & D\end{bmatrix}\] where $A$ has both maximal rank $r$ and maximal $r$-volume. Thus the permutations $P_i$ in Definition~\ref{def:PSA} are identities. 

One can show that \[r \leq p \leq q\]
and \[\rank(\widehat{M})=r.\]
It follows directly from the Definition~\ref{def:PSA} that if $r<p$ then $\rank(M)=r$ and one can show that in this case 
\[\lVert \widehat{M}-M \rVert_C = 0.\]
When we are thus proving error bounds for maximal volume skeleton approximations we only need to consider the case $r=p$. We will, however, formulate the bounds in terms of $r=\rank(A)$ as this is equal to the rank of the approximation.

The following theorem has been shown by Goreinov and Tyrtyshnikov in~\cite{GorT01}.
\begin{theorem}[Goreinov-Tyrtyshnikov\cite{GorT01}]\label{thm1}
Let $\widehat{M}$ be a maximal volume pseudo skeleton approximation of $M$ parameters $p=q$ and let $r=\rank{(\widehat{M})}$. Then
\[
      \|\widehat{M}-M\|_C \le \left(r+1\right) \sigma_{r+1} \left(M\right).
\]
\end {theorem}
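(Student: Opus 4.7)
Since the assumption $p=q=r$ makes $A$ a square invertible matrix, $A^+ = A^{-1}$, and a direct block computation gives
\[\widehat{M} - M = \begin{bmatrix} 0 & 0 \\ 0 & CA^{-1}B - D \end{bmatrix}.\]
Hence $\|\widehat{M} - M\|_C = \max_{i,j}|c_i^T A^{-1} b_j - d_{ij}|$, where $c_i^T$ is a row of $C$, $b_j$ a column of $B$, and $d_{ij}$ the corresponding entry of $D$. The plan is to bound a single such entry and then take the maximum.

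Fix $(i,j)$ and form the bordered $(r+1)\times(r+1)$ submatrix
\[\widetilde{M} = \begin{bmatrix} A & b_j \\ c_i^T & d_{ij} \end{bmatrix}\]
of $M$. The Schur complement identity gives $\det(\widetilde{M}) = \det(A)\,(d_{ij} - c_i^T A^{-1} b_j)$, and since $\det(A)\neq 0$ by the maximal rank assumption,
\[|c_i^T A^{-1} b_j - d_{ij}| = |\det(\widetilde{M})|/|\det(A)|.\]
The task is thereby reduced to showing $|\det(\widetilde{M})| \leq (r+1)\,\sigma_{r+1}(M)\,|\det(A)|$.

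To control $|\det(\widetilde{M})|$, I would invoke the adjugate identity $\adj(\widetilde{M}) = \det(\widetilde{M})\,\widetilde{M}^{-1}$ (assuming $\widetilde{M}$ is invertible; otherwise $\det(\widetilde{M})=0$ and the bound is trivial). The entries of $\adj(\widetilde{M})$ are, up to sign, the $r\times r$ minors of $\widetilde{M}$, each of which is the determinant of some $r\times r$ submatrix of $M$. By the maximal $r$-volume property of $A$, every such minor has absolute value at most $|\det(A)|$, and therefore $\|\adj(\widetilde{M})\|_C \leq |\det(A)|$. The elementary norm bound $\|\cdot\|_2 \leq (r+1)\|\cdot\|_C$ for $(r+1)\times(r+1)$ matrices, which follows from $\|\cdot\|_2\leq\|\cdot\|_F$, then yields $\|\adj(\widetilde{M})\|_2 \leq (r+1)\,|\det(A)|$.

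On the other hand, the singular values of $\adj(\widetilde{M})$ are the complementary products of those of $\widetilde{M}$, so $\|\adj(\widetilde{M})\|_2 = |\det(\widetilde{M})|/\sigma_{r+1}(\widetilde{M})$, and Cauchy interlacing for singular values of submatrices gives $\sigma_{r+1}(\widetilde{M}) \leq \sigma_{r+1}(M)$. Combining these inequalities produces exactly $|\det(\widetilde{M})| \leq (r+1)\,\sigma_{r+1}(M)\,|\det(A)|$ and finishes the proof. The pivotal and potentially slack step is the Chebyshev-to-spectral conversion on $\adj(\widetilde{M})$: this is where the factor $(r+1)$ enters, and a sharper treatment of this passage, possibly exploiting the full singular spectrum of $\widetilde{M}$ rather than only $\sigma_{r+1}$, is precisely what an improved bound of the kind promised in the abstract must address.
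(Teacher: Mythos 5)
Your proof is correct. Note first that the paper does not actually prove Theorem~\ref{thm1}: it is quoted from Goreinov and Tyrtyshnikov \cite{GorT01} without proof, so there is no in-paper argument to match line by line. What you have reconstructed is essentially the classical argument, and it is sound: the reduction of $\lVert\widehat{M}-M\rVert_C$ to $\lvert\det\widetilde{M}\rvert/\lvert\det A\rvert$ via the Schur complement, the bound $\lVert\adj(\widetilde{M})\rVert_C\le\lvert\det A\rvert$ from maximal volume, the conversion $\lVert\adj(\widetilde{M})\rVert_2\le(r+1)\lVert\adj(\widetilde{M})\rVert_C$, the identity $\lVert\adj(\widetilde{M})\rVert_2=\lvert\det\widetilde{M}\rvert/\sigma_{r+1}(\widetilde{M})$, and interlacing all check out. (Two cosmetic points: the theorem only assumes $p=q$ with $r=\rank(\widehat M)$, so you should say explicitly that the case $r<p$ forces $\rank(M)=r$, whence $\sigma_{r+1}(M)=0$ and the error vanishes, as the paper notes before stating the theorem; and your norm conversion is really $\lVert\cdot\rVert_2\le\lVert\cdot\rVert_F\le(r+1)\lVert\cdot\rVert_C$, a Chebyshev-to-Frobenius step, which is what you wrote in the displayed inequality even though the surrounding prose calls it ``Chebyshev-to-spectral.'') It is worth observing that your argument is precisely the template the paper refines in Theorems~\ref{thm3} and~\ref{thm4}: there, instead of bounding a single extremal singular value of the adjugate by $(r+1)$ times its largest entry, the paper bounds $\trace(\adj(\cdot))$, i.e.\ the \emph{sum} of all $r+1$ complementary products of singular values, by $(r+1)\lvert\det A\rvert$, which turns the factor $(r+1)\sigma_{r+1}$ into the harmonic-mean quantity $\zeta_r\,\sigma_{r+1}$. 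Your closing remark identifying the Chebyshev-to-norm conversion as the slack step, and suggesting that exploiting the full spectrum of $\widetilde{M}$ is where an improvement must come from, is exactly the mechanism of the paper's main results.
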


 This bound suggests that the error of any CUR approximation may be substantially larger than error of the approximation based on the truncated singular value decomposition. An approximation proposed by Osinsky and Zamarashkin in~\cite{OsiZ18} gives substantially better errors in many cases. 
They propose a larger $A$ and replace the generalised inverse $A^+$ in the pseudo-skeleton approximation by the generalised inverse $A_r^+$ where $A_r$ is the rank $r$ approximation of the matrix $A$ based on the truncated singular value decomposition.  We denote this \emph{rank $r$ pseudo-skeleton approximation} by $\widetilde{M}_r$. Instead of the maximal volume of $A$ one then chooses the permutations to maximise the volume of $A_r$ which is also called the \emph{$r$-projective volume of $A$}. The error of the resulting approximation $\widetilde{M}_r$ has then been shown to be bounded by:
\begin{theorem}[Osinsky-Zamarashkin\cite{OsiZ18}]\label{thm2}
Let $\widetilde{M}_r$ be the rank $r$ pseudo skeleton approximation with maximal $r$-projective volume where the parameters $p,q$ are defined above and larger or equal to $r$.
Then 
\begin{equation} 
\left\| M - \widetilde{M}_r \right\|_C\ \le \sqrt {\left( 1 + \frac{r}{p - r + 1} \right) \left( 1 + \frac{r}{q - r + 1} \right)}\; \sigma _{r + 1}( M )
\end{equation}
\end{theorem}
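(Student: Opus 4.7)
My plan is to reduce the estimation of $\|M-\widetilde{M}_r\|_C$ to a pointwise estimate on an augmented submatrix. For each entry of the residual I would identify a small submatrix of $M$ that contains both the chosen block $A$ and the target entry, and then control the residual using only properties of this small submatrix together with the singular value interlacing bound $\sigma_{r+1}(\widetilde{A})\le\sigma_{r+1}(M)$, valid for any submatrix $\widetilde{A}$ of $M$.

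Residuals within the $A$ block are immediate: because $AA_r^+A=A_r$, the top-left block of $M-\widetilde{M}_r$ is simply $A-A_r$, whose Chebyshev norm is bounded by $\sigma_{r+1}(A)\le\sigma_{r+1}(M)$, and since the coefficient in the stated bound is at least one, this case is handled. The genuinely interesting case is an entry $(i,j)$ lying in the $D$ block; the arguments for the $B$ and $C$ blocks will follow by a one-sided analog of the same proof and are dominated by the two-sided estimate.

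For a $D$-block entry I would form the augmented matrix
\[
\widetilde{A} \;=\; \begin{bmatrix} A & b \\ c^T & d \end{bmatrix},
\]
where $b$, $c^T$ and $d$ are the restrictions of $B$, $C$ and $D$ to the relevant column and row, and then prove a pointwise identity of the form $d-c^TA_r^+b = \sigma_{r+1}(\widetilde{A})\cdot\rho$, where $\rho$ is (up to signs) a ratio of the $r$-projective volumes of $\widetilde{A}$ and $A$. The maximality of the $r$-projective volume of $A$, combined with a Cauchy--Binet style expansion of $\vol_r$ as a sum of squared $r\times r$ minors, will bound $|\rho|$ by $\sqrt{(1+r/(p-r+1))(1+r/(q-r+1))}$, and the claim then follows from $\sigma_{r+1}(\widetilde{A})\le\sigma_{r+1}(M)$.

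The main obstacle I expect is the derivation of the pointwise identity: one has to understand precisely how the rank-$r$ truncated SVD of $\widetilde{A}$ deviates from a natural extension of $A_r$, which calls for a careful manipulation of the SVD of a matrix augmented by one row and one column. Once this identity is established the rest is combinatorial: expand $\vol_r(\widetilde{A})^2$ as a sum over all $r\times r$ minors of $\widetilde{A}$, observe that each such minor is, by the maximality assumption, at most $\vol_r(A)^2$ in absolute value when compared to the minor it would displace, and count the number of ways a fixed block of $A$ can be augmented to an $r\times r$ submatrix of $\widetilde{A}$ to extract the factors $1+r/(p-r+1)$ and $1+r/(q-r+1)$.
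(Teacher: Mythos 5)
First, note that the paper does not prove this statement at all: Theorem~\ref{thm2} is quoted from Osinsky and Zamarashkin~\cite{OsiZ18}, so there is no in-paper proof to compare against. Your high-level architecture (reduce to an entrywise bound on an augmented $(p+1)\times(q+1)$ submatrix $\widetilde{A}$, use singular value interlacing $\sigma_{r+1}(\widetilde{A})\le\sigma_{r+1}(M)$, and convert projective-volume maximality into the numerical constant via a Cauchy--Binet count) is indeed the right skeleton and parallels both the cited proof and this paper's proofs of Theorems~\ref{thm3} and~\ref{thm4}. The handling of the $A$-block via $AA_r^+A=A_r$ is also correct.

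However, the step you place all the weight on --- the ``pointwise identity'' $d-c^TA_r^+b=\sigma_{r+1}(\widetilde{A})\cdot\rho$ with $\rho$ a ratio of $r$-projective volumes of $\widetilde{A}$ and $A$ --- is false, and this is a genuine gap rather than a deferred computation. Such an identity holds only in the untruncated square case $r=p=q$, where $d-c^TA^{-1}b=\det\widetilde{A}/\det A$ and $\det\widetilde{A}=\vol_r(\widetilde{A}_r)\,\sigma_{r+1}(\widetilde{A})$ up to sign. For $p,q>r$ it already fails for $A=\operatorname{diag}(2,1)$, $b=c=0$, $d=1/2$, $r=1$: the residual entry is $1/2$, while $\sigma_2(\widetilde{A})\,\vol_1(\widetilde{A})/\vol_1(A)=1\cdot 2/2=1$ (and here $A$ is the maximal projective-volume $2\times2$ submatrix, so the hypothesis is not the issue). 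The correct replacement is an \emph{inequality}, namely the key lemma of~\cite{OsiZ18}: $\lvert d-c^TA_r^+b\rvert\le\sigma_{r+1}(\widetilde{A})\sqrt{\bigl(1+\lVert A_r^+b\rVert_2^2\bigr)\bigl(1+\lVert (A_r^+)^Tc\rVert_2^2\bigr)}$, after which maximality of the $r$-projective volume is applied \emph{separately} to the one-sided augmentations $\begin{bmatrix}A & b\end{bmatrix}$ and $\begin{bmatrix}A\\ c^T\end{bmatrix}$ (not to the full $\widetilde{A}$) to obtain $\lVert A_r^+b\rVert_2^2\le r/(q-r+1)$ and $\lVert (A_r^+)^Tc\rVert_2^2\le r/(p-r+1)$. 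Both the inequality and these one-sided volume-ratio bounds (which for projective volume are more delicate than the plain Cauchy--Binet count of Lemma~\ref{lemma3}, since $\vol_r^2$ is not the sum of squared $r\times r$ minors but only the largest term of that sum) constitute essentially all of the difficulty of the theorem, and your proposal does not supply either.
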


One can see that if $p$ and $q$ are chosen sufficiently large, the error bound obtained in Theorem~\ref{thm2} for the rank $r$ pseudo skeleton approximation by Osinsky and Zamarashkin is close to the singular value $\sigma_{r+1}(M)$. For the case $p=q=2r-1$ the bound obtained is twice  $\sigma_{r+1}(M)$.

In many cases, however, the upper bound given in Theorem~\ref{thm1} substantially overestimates the actual error of the original maximal volume pseudo-skeleton approximation. In fact, it can be seen that the error depends on the spectral decay. Better upper bounds for the pseudo skeleton approximation have been obtained in \cite{HegdH21} in the context of nuclear norms. In practice this does not matter so much for fast decaying singular values where one typically utilises small ranks $r$ and the factor $r+1$ does not have a major impact on the error. This is different for slowly decaying singular values. For example, if $\sigma_k=1/k$ the error bound of Theorem~\ref{thm1} would suggest that the pseudo-skeleton approximation does not converge as $r \rightarrow \infty$ at all. Here, it is essential to have better error bounds. 
In the following sections we first prove better error bounds. For this, we improve the error bounds given 
in Theorem~\ref{thm1} by including a component which takes into account the rate of decay of the singular values. The bound is then applied to the large and practically important class of matrices (relating to smooth random fields and Green's functions) which have singular values of the form $\sigma_k = O(k^{-s})$. We also provide a new algorithm which has very similar properties to the maximal $r$-volume algorithms but which is based on the standard maximal volume approach. We prove
that this approach has similar properties to the algorithm from~\cite{OsiZ18}for symmetric positive definite matrices. However, we were not able to prove the same bounds for the case of non-symmetric positive matrices. The proof in the symmetric case relies substantially on Weyl's lemma states 
that the eigenvalues of a principal submatrix of a symmetric matrix $M$ is bounded by corresponding eigenvalues of the matrix
$M$. An extension of Weyl's lemma to singular values does not in general lead to useful bounds
of the singular values of submatrices and the results for the symmetric positive definite case cannot be easily generalised. 

Note that an error bound for Schatten-1 norm and symmetric positive symmetric matrices has been given in~\cite{HegdH21}. It can be seen that the Schatten-1 norm is well-defined for matrices with singular values $\sigma_k = O(k^{-s})$ where $s>1$. (Note that it is also defined for other classes of matrices but the class mentioned is the one of interest here.) In~\cite{HegdH21}, a totally different approach is used to obtain the error bounds which improve over the standard bounds of the form $(r+1)\sigma_{r+1}$.

\section{The Positive Definite Case}

In this section we improve the bound provided by Theorem~\ref{thm1} for the case where the matrix $M$ is symmetric positive definite. Let $M$ be of the form \[M = \begin{bmatrix}A & B \\ B^T & D\end{bmatrix}.\]
As $M$ is positive definite, so is $A$. Consequently, the rank of $A\in \R^{p,p}$ is $r=p$. We assume that (after a suitable symmetric permutation of rows and columns) $A$ is a principle submatrix of $M$ with maximal volume. 
The pseudo-skeleton approximation takes the form
\[\widehat{M} = \begin{bmatrix} A \\ B^T\end{bmatrix} A^{-1}
   \begin{bmatrix} A & B \end{bmatrix}.\]
   
In the following we utilise the harmonic mean of the eigenvalues $\lambda_1,\ldots,\lambda_{r+1}$ defined as
\[H(\lambda_1,\ldots,\lambda_{r+1}) = \frac{r+1}{\lambda_1^{-1}+ 
    \cdots + \lambda_{r+1}^{-1}}.\]
We then introduce parameters $\zeta_r(M)$ for $r=1,\ldots,n$ which characterise the decay of the singular values of $M$. We define them as \[\zeta_r(M) := \frac{H(\lambda_1,\ldots,\lambda_{r+1})}{\lambda_{r+1}}.
\]
One can see that $1\leq \zeta_r(M)\leq r+1$. Values of $\zeta_r(M)$ close to $r+1$ correspond to fast decay and values close to one to slow decay of the eigenvalues of $M$. 

\begin{theorem}\label{thm3}
Let $M\in\R^{n,n}$ be a symmetric positive definite matrix and let $\widehat{M}$ be a maximal volume pseudo skeleton  approximation of $M$ with parameters $p=q$. Then $r=\rank(\widehat{M})=p$ and
\[
    \left\|M-\widehat{M}\right\|_C \le \zeta_r(M)\, \lambda_{r+1}.
\]
\end {theorem}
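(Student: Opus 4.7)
The plan is to first reduce the problem to bounding the diagonal entries of the Schur complement, and then to exploit the maximal volume property on the $(r+1)\times(r+1)$ principal submatrices of $M$ that extend $A$.

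Direct multiplication shows $M - \widehat{M}$ equals $\begin{bmatrix}0&0\\0&S\end{bmatrix}$ where $S = D - B^T A^{-1} B$ is the Schur complement. Since $M$ is symmetric positive definite, so is $S$, hence $|S_{ij}| \le \sqrt{S_{ii} S_{jj}} \le \max_k S_{kk}$. This reduces the task to showing $S_{ii} \le H(\lambda_1,\ldots,\lambda_{r+1})$ for every diagonal index $i$ of the bottom-right block (noting that $\zeta_r(M)\lambda_{r+1}$ is by definition exactly this harmonic mean).

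For a fixed such $i$, let $\widetilde{A}_i$ be the $(r+1)\times(r+1)$ principal submatrix of $M$ obtained by adjoining row/column $i$ (from the $D$-block) to $A$, and denote its eigenvalues $\mu_1\ge\cdots\ge\mu_{r+1}>0$. A standard Schur-complement identity gives $S_{ii} = \det(\widetilde{A}_i)/\det(A)$. The crucial step is the following: because $A$ has maximal volume among all $r\times r$ principal submatrices of $M$, it is in particular maximal among the $r+1$ principal $r\times r$ minors of $\widetilde{A}_i$. Hence $\det(A)$ is at least the average of those minors, and that average is $e_r(\mu_1,\ldots,\mu_{r+1})/(r+1)$. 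Dividing $\det(\widetilde{A}_i) = \mu_1\cdots\mu_{r+1} = e_{r+1}(\mu)$ by this lower bound and simplifying $e_{r+1}(\mu)/e_r(\mu) = 1/\sum_j \mu_j^{-1}$ yields exactly
\[
S_{ii} \le \frac{r+1}{\mu_1^{-1}+\cdots+\mu_{r+1}^{-1}} = H(\mu_1,\ldots,\mu_{r+1}).
\]

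To finish, I would invoke Cauchy (Weyl) interlacing for a principal submatrix of a symmetric matrix: $\mu_k \le \lambda_k(M)$ for $k=1,\ldots,r+1$. Since the harmonic mean is monotone increasing in each of its positive arguments, this yields $H(\mu_1,\ldots,\mu_{r+1}) \le H(\lambda_1,\ldots,\lambda_{r+1}) = \zeta_r(M)\,\lambda_{r+1}$, completing the bound. The main conceptual obstacle is recognising that the maximal-volume property should be applied \emph{locally} on $\widetilde{A}_i$ (using only the weaker consequence ``max $\ge$ average'' rather than any Cramer-style identity), and then recognising the resulting ratio of elementary symmetric polynomials as a harmonic mean; once these are in place, the monotonicity and interlacing steps are routine.
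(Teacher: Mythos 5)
Your proposal is correct and follows essentially the same route as the paper: reduce to the diagonal of the Schur complement, border $A$ to an $(r+1)\times(r+1)$ principal submatrix, write the diagonal entry as $\det(\widetilde{A}_i)/\det(A)$, use maximality of $\det A$ among the $r+1$ principal $r\times r$ minors (``max $\ge$ average''), and finish with Cauchy interlacing and monotonicity of the harmonic mean. Your phrasing via elementary symmetric polynomials $e_{r+1}(\mu)/e_r(\mu)$ is just the paper's $\det M/\trace(\adj M)=1/\trace(M^{-1})$ in different notation, so there is no substantive difference.
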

\begin{proof}
Recall that $A$ is positive definite with rank $r=p$.

We first consider the case $n=p+1$ such that
\[M= \begin{bmatrix}A & b \\ b^T & d\end{bmatrix}\]
for some $b\in\R^n$ and $d\in\R_+$. Consequently, 
\[\widehat{M} = \begin{bmatrix} A \\ b^T\end{bmatrix} A^{-1}
   \begin{bmatrix} A & b \end{bmatrix}\]
  By direct computation one obtains
    $$\|M-\widehat{M}\|_C = d - b^T A^{-1}b.$$
  Furthermore, Schur's formula yields
     $$d - b^T A^{-1}b = \frac{\det M}{\det A}.$$
  As $M$ has full rank $r+1$, it's inverse exists and has diagonal elements
  $$M^{-1}_{i,i} = \frac{(\adj M)_{i,i}}{\det M}$$
  (by Cramer's rule) where the $i$-th diagonal elements of the adjugate $\adj M$ are the determinants of the principle $r$ by $r$ sub-matrices of $M$ obtained by omitting row $i$ and column $i$ from $M$. As $A$ is the principle sub-matrix with maximal determinant one has
    $$(\adj M)_{i,i} \leq (\adj M)_{r+1,r+1} = \det A$$
  and consequently
    $$\trace{(\adj M)} \leq (r+1) \det A.$$
  As $M^{-1}= (\det M)^{-1} \adj M$ (by Cramer's rule) one then gets
  $$d-b^TA^{-1}b = \frac{\det M}{\det A} \leq \frac{r+1}{\trace M^{-1}}$$
  from which the claimed bound follows as $M$ is invertible.
  
  The general case is then obtained as follows.
    Let the matrices $B$ and $D$ given by the block partitioning of the matrix $M$:
  $$M =\begin{bmatrix} A & B \\ B^T & D \end{bmatrix}.$$
  
  Then the Chebyshev norm of the error of the rank $r$ pseudo skeleton approximation can be seen to be
  the Chebyshev norm of the Schur complement of $A$. As $M$ is positive definite, so is the Schur complement. Thus the maximal elements of the Schur complement are positive and diagonal and so
  $$\lVert M - \widehat{M}\rVert_C = \max_{i=1,\ldots,n-r} (d_{i,i} - b_i^T A^{-1} b_i)$$
  where the $b_i$ are the $i$-th columns of $B$.
  
  Now let $k=\operatorname{argmax}_{i=1,\ldots,n-r} (d_{i,i} - b_i^T A^{-1} b_i)$ and 
  \begin{equation}\label{eqnYD}
  Y = \begin{bmatrix} A & b_k\\ b_k^T & d_{k,k}\end{bmatrix}\in\R^{r+1,r+1}
  \end{equation}
  As $Y$ is an $r+1$ by $r+1$ principle sub-matrix of $M$, the matrix $Y$ is positive definite. Furthermore, $A$ is also a maximal volume principal sub-matrix of $Y$ and hence 
  $$\widehat{Y}= \begin{bmatrix} A \\ b_k^T\end{bmatrix} A^{-1} \begin{bmatrix} A & b_k\end{bmatrix}$$ 
  is a maximal volume pseudo skeleton approximation of $Y$. One then has
  \begin{equation}\label{eqnYM}\lVert M - \widehat{M}\rVert_C = d_{k,k} - b_k^T A^{-1} b_k = \lVert Y - \widehat{Y}\rVert_C.\end{equation}
  
  From the case $n=p+1$ considered above it follows that
 \begin{equation}  \label{eqnYB}
      \|Y-\widehat{Y}\|_C \le \frac{r+1}{\lambda_1(Y)^{-1}+\cdots+\lambda_{r+1}(Y)^{-1}}.
\end{equation}
 As $Y$ is a principle sub-matrix  of $M$ a consequence of the interlacing theorem for eigenvalues~\cite{Que87} is
 $$\lambda_i(Y) \leq \lambda_i(M), \quad i=1,\ldots,r+1$$
 from which one directly gets
  \[
      \|Y-\widehat{Y}\|_C \le \frac{r+1}{\lambda_1(M)^{-1}+\cdots+\lambda_{r+1}(M)^{-1}}
 \]
 and the claimed bound follows from equation~(\ref{eqnYM}).
\end{proof}

A direct consequence is that for a large class of matrices one can obtain errors which up to a constant are of the same size as the errors of the singular value based approach.
\begin{corollary}\label{corr1thm3}
Let  $M\in\R^{n,n}$ be symmetric positive definite and let $\widehat{M}$ be the maximal volume pseudo-skeleton approximation with rank at most $r$ and parameters $p=r$ and $q$ as established in Definition~(\ref{def:PSA}). 
Furthermore, let the eigenvalues $\lambda_k$ of $M$ satisfy
\[ C_1\,k^{-s} \leq \lambda_k \leq C_2\, k^{-s}\]
for some positive real numbers $C_1,C_2$ and $s>0$ and $k=1,2,3,\ldots$. Then
\begin{equation}
  \left\|\widehat{M}-M\right\|_C \le c\, \lambda_{r+1}
\end{equation}
with $c=(s+1)C_2/C_1$.
\end{corollary}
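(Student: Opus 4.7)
The plan is to apply Theorem~\ref{thm3} directly and then show that, under the given polynomial decay of the eigenvalues, the quantity $\zeta_r(M)$ is bounded by the constant $c = (s+1)C_2/C_1$ uniformly in $r$. Since Theorem~\ref{thm3} gives $\lVert \widehat{M}-M\rVert_C \le \zeta_r(M)\,\lambda_{r+1}$, this immediately yields the stated corollary.

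To bound $\zeta_r(M)$, I would unpack the definition:
\[
\zeta_r(M) \;=\; \frac{r+1}{\lambda_{r+1}\bigl(\lambda_1^{-1}+\cdots+\lambda_{r+1}^{-1}\bigr)}.
\]
The goal is a lower bound on the denominator. First I would use the lower eigenvalue estimate $\lambda_{r+1}\ge C_1(r+1)^{-s}$. Next I would use the upper eigenvalue estimate $\lambda_k \le C_2 k^{-s}$, which reverses to give $\lambda_k^{-1}\ge C_2^{-1}k^s$, and hence
\[
\sum_{k=1}^{r+1}\lambda_k^{-1} \;\ge\; \frac{1}{C_2}\sum_{k=1}^{r+1} k^s.
\]
Combining these two ingredients reduces the problem to a lower bound on $\sum_{k=1}^{r+1} k^s$.

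For the sum, I would use the standard integral comparison $\sum_{k=1}^{r+1}k^s \ge \int_0^{r+1}x^s\,dx = (r+1)^{s+1}/(s+1)$, which holds for any $s>0$ by monotonicity of $x\mapsto x^s$. Plugging everything in gives
\[
\lambda_{r+1}\sum_{k=1}^{r+1}\lambda_k^{-1} \;\ge\; \frac{C_1(r+1)^{-s}\,(r+1)^{s+1}}{C_2(s+1)} \;=\; \frac{C_1(r+1)}{C_2(s+1)},
\]
so $\zeta_r(M)\le (s+1)C_2/C_1$, independent of $r$.

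There is no real obstacle: the polynomial decay hypothesis is exactly strong enough to make both the harmonic mean and the trailing eigenvalue scale like $(r+1)^{-s}$, so their ratio $\zeta_r$ stays bounded. The only minor care needed is to apply each eigenvalue inequality in the correct direction (the lower bound $C_1 k^{-s}\le\lambda_k$ is used only at index $k=r+1$, while the upper bound $\lambda_k\le C_2 k^{-s}$ is used for all $k=1,\ldots,r+1$), and to verify the integral comparison for $s>0$. The final constant $c=(s+1)C_2/C_1$ emerges directly.
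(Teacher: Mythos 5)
Your proof is correct and follows essentially the same route as the paper: apply Theorem~\ref{thm3}, use the upper eigenvalue bound on $\lambda_1^{-1}+\cdots+\lambda_{r+1}^{-1}$ and the lower bound on $\lambda_{r+1}$, and control the resulting sum $\sum_{k=1}^{r+1}k^s$ from below by the integral $(r+1)^{s+1}/(s+1)$. The paper merely phrases the intermediate step via $H(1,2^{-s},\ldots,(r+1)^{-s})$, which is the same computation.
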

\begin{proof}
 From the assumptions on the decay of the eigenvalues one gets
 \begin{align*}
    \zeta_r(M) &= \frac{H(\lambda_1,\ldots,\lambda_{r+1})}{\lambda_{r+1}}\\
 &
 \leq \frac{C_2}{C_1}\frac{H(1,2^{-s},\ldots,(r+1)^{-s})}{(r+1)^{-s}} \\
 &= \frac{C_2}{C_1}\frac{(r+1)^{s+1}}{(1+2^{s}+\cdots+(r+1)^{s})}\\
 &\leq \frac{C_2}{C_1}\frac{(r+1)^{s+1}}{(r+1)^{s+1}/(s+1)} = \frac{C_2}{C_1}(s+1).
 \end{align*}
 Here the last inequality is obtained by bounding the sum in the denominator by an integral. The claimed result follows.
\end{proof}

We now propose a rank $r$ approximation which is based on the maximal volume approximation with rank $r < p$ which is obtained using a truncated singular value decomposition. Corollary~\ref{cor3} provides the details together with a proof of an error bound. Note that in the following the parameter $r$ is not the rank of the maximal volume pseudo skeleton approximation but is chosen independently satisfying $r<p$ and $\lambda_{r+1}>0$.
\begin{corollary}\label{cor3}
    Let $M\in\R^{n,n}$ be a symmetric positive definite matrix and let $\widehat{M}$ be a maximal volume pseudo skeleton  approximation of $M$ with parameters $p=q$. Furthermore, let $\widehat{M}_r:=\left(\widehat{M}\right)_r$ be the approximation based on the $r$-truncated singular value decomposition of $\widehat{M}$ and $r<p$.
 Then
  \[\lVert\widehat{M}_r-M\rVert_C \leq 
    \left(\zeta_p(M)\frac{\lambda_{p+1}}{\lambda_{r+1}}+1\right) \lambda_{r+1}
    \]
    and $\rank(\widehat{M})=r$.
\end{corollary}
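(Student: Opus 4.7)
The plan is to split the error by the triangle inequality
\[
\lVert \widehat{M}_r - M\rVert_C \le \lVert \widehat{M}_r - \widehat{M}\rVert_C + \lVert \widehat{M} - M\rVert_C,
\]
and to bound each piece separately. The second term is handled directly by Theorem~\ref{thm3} applied with the role of $r$ played by $p=\rank(\widehat{M})$, giving $\lVert \widehat{M}-M\rVert_C \le \zeta_p(M)\,\lambda_{p+1}$.

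For the first term I would use that $\widehat{M}_r$ is the rank-$r$ truncated SVD of $\widehat{M}$, which is optimal in the spectral norm by Eckart-Young-Mirsky, so
\[
\lVert \widehat{M}_r - \widehat{M}\rVert_C \le \lVert \widehat{M}_r - \widehat{M}\rVert_2 = \sigma_{r+1}(\widehat{M}).
\]
The key step is then to compare $\sigma_{r+1}(\widehat{M})$ to $\lambda_{r+1}(M)$. Here I would use the block form
\[
\widehat{M} = \begin{bmatrix} A & B \\ B^T & B^TA^{-1}B\end{bmatrix},
\qquad
M - \widehat{M} = \begin{bmatrix} 0 & 0 \\ 0 & D - B^TA^{-1}B\end{bmatrix}.
\]
Since $M$ is symmetric positive definite, the Schur complement $D-B^TA^{-1}B$ is positive semidefinite, hence so is $M-\widehat{M}$. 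This gives the Loewner ordering $0 \preceq \widehat{M} \preceq M$, from which Weyl's monotonicity yields $\lambda_i(\widehat{M}) \le \lambda_i(M)$ for every $i$. Because $\widehat{M}$ is symmetric positive semidefinite, $\sigma_{r+1}(\widehat{M}) = \lambda_{r+1}(\widehat{M}) \le \lambda_{r+1}(M)$.

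Combining the two bounds gives
\[
\lVert \widehat{M}_r - M\rVert_C \le \lambda_{r+1} + \zeta_p(M)\,\lambda_{p+1} = \Bigl(1 + \zeta_p(M)\,\tfrac{\lambda_{p+1}}{\lambda_{r+1}}\Bigr)\,\lambda_{r+1},
\]
which is the claim. The rank statement follows because $\widehat{M}_r$ is defined as the $r$-truncated SVD and $\lambda_{r+1}>0$ guarantees the $r$-th singular value of $\widehat{M}$ is positive (using $\lambda_{r+1}(\widehat{M})\le \lambda_{r+1}(M)$ and the fact that deflating from $\widehat{M}$'s $p$ nonzero eigenvalues to $r<p$ still retains $r$ of them). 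I expect no serious obstacle: the only nontrivial ingredient is the Loewner inequality $\widehat{M}\preceq M$ coming from the positive semidefiniteness of the Schur complement, and this is immediate from the assumption that $M$ is positive definite.
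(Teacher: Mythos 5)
Your proposal is correct and follows essentially the same route as the paper: split via the triangle inequality, bound $\lVert \widehat{M}-M\rVert_C$ by Theorem~\ref{thm3}, bound $\lVert \widehat{M}_r-\widehat{M}\rVert_C$ by the Eckart--Young estimate $\sigma_{r+1}(\widehat{M})$, and control that singular value by observing that $M-\widehat{M}$ is the (positive semidefinite) Schur complement block so that Weyl's inequality gives $\lambda_{r+1}(\widehat{M})\le\lambda_{r+1}(M)$. Your additional remark that $\widehat{M}$ is itself positive semidefinite (so its singular values coincide with its eigenvalues) makes explicit a step the paper leaves implicit.
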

\begin{proof}
First, a direct application of Theorem~\ref{thm3} gives
\[\lVert \widehat{M}-M\rVert_C \leq \zeta_p(M) \frac{\lambda_{p+1}}{\lambda_{r+1}}\, \lambda_{r+1}.\]
Then, we show
\[\lVert\widehat{M}_r-\widehat{M}\rVert_C \leq \lambda_{r+1}(\widehat{M}_p) \leq \lambda_{r+1}.\]
The first inequality is the standard bound for the singular value based approximation so that it remains to show that
\[\lambda_{r+1}(\widehat{M}) \leq \lambda_{r+1}(M).\]

As
\[M-\widehat{M} = \begin{bmatrix} 0 & 0 \\ 0 & D- B^T A^{-1} B\end{bmatrix}\]
is positive semi-definite as $D- B^T A^{-1} B$ is a Schur complement and $M = \widehat{M} + (M-\widehat{M})$ one obtains
$\lambda_{r+1}(\widehat{M}) \leq \lambda_{r+1}(M)$ from Weyl's inequality.
\end{proof}

Using the bound $\zeta_p(M) \leq p+1$ one gets
\[\lVert\widehat{M}_r-M\rVert_C \leq 
    \left((p+1)\frac{\lambda_{p+1}}{\lambda_{r+1}}+1\right) \lambda_{r+1}.\]
For geometric decay of the eigenvalues, i.e.,
\[C_1 q^k \leq \lambda_k \leq C_2q^k\]
for some $q\in (0,1)$ and $C_s > 0$ one gets for a choice of $p=2r$ the bound
\[\lVert\widehat{M}_r-M\rVert_C \leq 
    \left(\frac{C_2}{C_1}(2r+1)q^r+1\right) \lambda_{r+1}.\]
It follows that the error bound converges to $\lambda_{r+1}$ for $r \rightarrow \infty$.

For a slower decay of the eigenvalues of the form
\[ C_1k^{-s} \leq \lambda_k \leq C_2k^{-s} \]
and a choice $p = \alpha(r+1)- 1$ for some $\alpha>1$ one obtains from Corollary~\ref{cor3} 
\[\lVert \widehat{M}_p-M\rVert_C \leq c \lambda_{r+1}\]
with
\[c = (C_2/C_1)^2 (s+1)\alpha^{-s}+1.\]
It follows that the suggested method has the same error order (in $r$) as the one based on the truncated singular value decomposition.

The new error bound is substantially better for matrices with eigenvalues which display moderate decay. In Figure~\ref{fig1} we provide the plots of the bounds for the case where $\lambda_k(M)=k^{-2}$. 
\begin{figure}
    \begin{center}
        \input{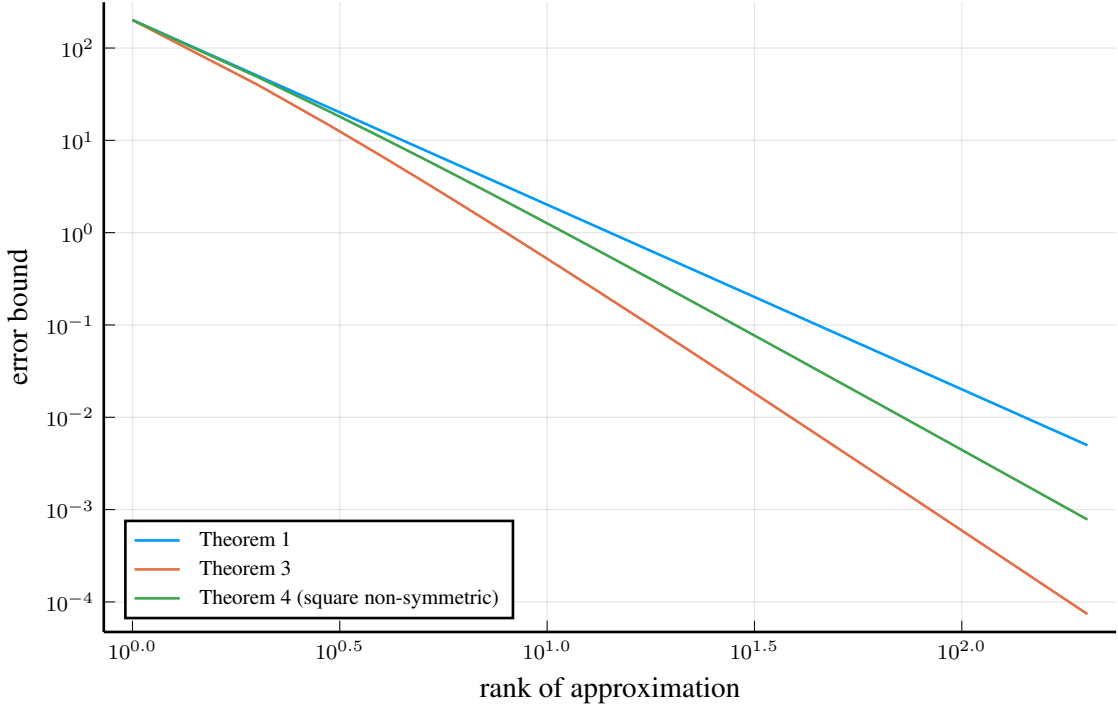}
    \end{center}
    \caption{\label{fig1}Comparison of error bounds for the case $\sigma_k(M)=k^{-2}$}
\end{figure}
Furthermore, the error bound is exact for the limiting case of $M=I$ (where $I$ is the identity matrix). It follows directly from Theorem~\ref{thm3} that any pseudo-skeleton approximation of the identity is an optimal approximation in the Chebyshev norm. While this can be also shown directly, it does not, however, follow from the theorems mentioned in the introduction.

\newpage 

\section{The General Case}

The analysis of the approximation error for non-symmetric matrices $M\in\R^{m,n}$ is based on the singular values of $M$. Here we assume that
\[M = \begin{bmatrix} A & B \\ C & D \end{bmatrix}\]
where  $A\in \R^{p,q}$ is the $p\times q$ submatrix with maximal rank which has maximal volume. We assume that the parameters established in Definition~\ref{def:PSA} satisfy $p\leq q$. As we are mostly interested in error bounds we can assume that any given matrix has been permuted such that it has this property. Furthermore, the case $p > q$ is treated by considering the transpose. 

A \emph{pseudo-skeleton approximation} is then given by 
\[\widehat{M} = \begin{bmatrix} A \\ C\end{bmatrix} A^+ 
\begin{bmatrix} A & B\end{bmatrix}.\]
In the following we will use the function $\zeta_r$ introduced in the previous section. In particular, one has 
\[\zeta_r(M^TM) = \frac{H(\sigma_1^2,\ldots,\sigma_{r+1}^2)}{\sigma_{r+1}^2}.
\]

\begin{theorem}\label{thm4}
Let the matrix $\widehat{M}$ with parameters $p\leq q$ be a maximal volume  pseudo-skeleton approximation of a matrix $M\in\R^{m,n}$ with $\rank(M)\geq p+1$. Then 
\begin{equation}
  \left\|\widehat{M}-M\right\|_C \le
  \sqrt{\frac{\zeta_r(M^TM)}{1-r/(q+1)}}\, \sigma_{r+1}(M).   
\end{equation}
\end {theorem}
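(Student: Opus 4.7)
The plan is to follow the template of the proof of Theorem~\ref{thm3}: reduce to a single worst-case entry and then bound using the maximal-volume property, but now using Cauchy--Binet and singular-value interlacing to cope with the rectangular setting.

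First I would pick $(i,j)$ attaining $\lVert \widehat M - M \rVert_C = |d_{i,j} - c_i^T A^+ b_j|$ and assemble the $(p+1)\times(q+1)$ submatrix
\[ \widetilde Y = \begin{bmatrix} A & b_j \\ c_i^T & d_{i,j} \end{bmatrix}, \]
writing $b,c,d$ for $b_j,c_i,d_{i,j}$ from here on. Since $\widetilde Y$ is a submatrix of $M$, the block $A$ is still a maximum-volume $p\times q$ submatrix inside $\widetilde Y$, and singular-value interlacing for submatrices gives $\sigma_k(\widetilde Y)\le \sigma_k(M)$ for $k \le p+1$. If $\rank(\widetilde Y) = p$ the added row and column must lie in the row and column spans of $A$, which forces $d - c^T A^+ b = 0$; so I may assume $\rank(\widetilde Y) = p+1$.

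Next I would derive a determinantal expression for the scalar error. Expanding $\det(\widetilde Y \widetilde Y^T)$ by Schur complement with respect to the invertible block $A A^T + b b^T$ and simplifying with Sherman--Morrison yields the identity
\[ \frac{\det(\widetilde Y \widetilde Y^T)}{\det(A A^T)} = \bigl(1 + b^T (A A^T)^{-1} b\bigr)\, c^T \bigl(I - A^T (A A^T)^{-1} A\bigr) c + (d - c^T A^+ b)^2, \]
so, since the first summand is non-negative, $(d - c^T A^+ b)^2 \le \det(\widetilde Y \widetilde Y^T)/\det(A A^T)$.

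The main step is bounding this ratio using the maximal-volume hypothesis via Cauchy--Binet double-counting. Maximality gives $\det(\widetilde Y_{I,J} \widetilde Y_{I,J}^T) \le \det(A A^T)$ for each of the $(p+1)(q+1)$ pairs $(I,J)$ with $|I|=p$ and $|J|=q$. Expanding each $\det(\widetilde Y_{I,J} \widetilde Y_{I,J}^T)$ via Cauchy--Binet as $\sum_{J' \subset J,\, |J'|=p} \det(\widetilde Y_{I,J'})^2$ and reordering the triple sum, the total $\sum_{(I,J)} \det(\widetilde Y_{I,J} \widetilde Y_{I,J}^T)$ collapses to $(q+1-p)\sum_{|J'|=p} \det(\widetilde Y_{:,J'}^T \widetilde Y_{:,J'})$, where the factor $q+1-p$ counts the $q$-subsets $J \supset J'$. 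The remaining sum is the $p$-th elementary symmetric polynomial in the eigenvalues of $\widetilde Y^T \widetilde Y$, and since those eigenvalues are $\sigma_1(\widetilde Y)^2,\ldots,\sigma_{p+1}(\widetilde Y)^2$ together with $q-p$ zeros, it equals $\det(\widetilde Y \widetilde Y^T)\,\trace\bigl((\widetilde Y \widetilde Y^T)^{-1}\bigr) = (p+1)\det(\widetilde Y \widetilde Y^T)/H(\sigma_1(\widetilde Y)^2,\ldots,\sigma_{p+1}(\widetilde Y)^2)$. Rearranging, and using $\sigma_k(\widetilde Y)\le \sigma_k(M)$ together with monotonicity of $H$, gives
\[ \frac{\det(\widetilde Y \widetilde Y^T)}{\det(A A^T)} \le \frac{H(\sigma_1(M)^2,\ldots,\sigma_{r+1}(M)^2)}{1 - r/(q+1)} = \frac{\zeta_r(M^T M)\,\sigma_{r+1}(M)^2}{1 - r/(q+1)}, \]
after which taking square roots yields the stated bound. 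I expect the hardest part to be the Cauchy--Binet bookkeeping that produces the combinatorial factor $q+1-p$; the other ingredients (Schur complement, Sherman--Morrison, singular-value interlacing) are standard.
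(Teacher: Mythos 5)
Your argument is, at its core, the paper's own proof of Theorem~\ref{thm4}: the same reduction to a $(p+1)\times(q+1)$ submatrix, the same use of singular-value interlacing, and a determinant-ratio bound obtained from maximal volume via Cauchy--Binet. Your global double-count over all $(p+1)(q+1)$ pairs $(I,J)$ is an equivalent repackaging of the paper's computation: your identity $\sum_{(I,J)}\det(\widetilde Y_{I,J}\widetilde Y_{I,J}^T)=(q+1-p)\,e_p(\widetilde Y^T\widetilde Y)=(q+1-p)\trace(\adj(\widetilde Y\widetilde Y^T))$ combined with maximality reproduces exactly the paper's inequality $\vol(\widetilde Y)^2\lVert \widetilde Y^+\rVert_F^2\le \frac{(r+1)(q+1)}{q+1-r}\vol(A)^2$, which the paper derives by applying Lemma~\ref{lemma3} row by row. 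Your Schur--complement/Sherman--Morrison identity is likewise a closed-form version of the paper's block-LU step showing $\vol(\widetilde Y)^2\ge(\lVert u\rVert_2^2+\gamma^2)\vol(A)^2$; your handling of the degenerate case $\rank(\widetilde Y)=p$ is actually more explicit than the paper's.

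There is one genuine gap, in the very first reduction. You assert that the Chebyshev norm is attained as $|d_{i,j}-c_i^TA^+b_j|$ for some $(i,j)$. That is only guaranteed when $p=q$. For $p<q$ the error matrix is
\[
\widehat M-M=\begin{bmatrix}0&0\\ CA^+A-C& CA^+B-D\end{bmatrix},
\]
and the lower-left block $CA^+A-C$ is in general nonzero (since $A^+A\ne I_q$ when $\rank(A)=p<q$), so the maximal entry may sit there rather than in $D-CA^+B$. The paper therefore takes the maximum over both $\lVert c_i^T(I-A^+A)\rVert_C$ and $|d_{i,j}-c_i^TA^+b_j|$ and bounds $\max(\lVert u\rVert_C^2,\gamma^2)\le\lVert u\rVert_2^2+\gamma^2$. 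The repair lies entirely inside your own machinery: your determinantal identity already contains the summand $\bigl(1+b^T(AA^T)^{-1}b\bigr)\,c^T(I-A^+A)c\ge\lVert(I-A^+A)c\rVert_2^2$, so the same ratio $\det(\widetilde Y\widetilde Y^T)/\det(AA^T)$ bounds $\lVert u\rVert_2^2+\gamma^2$ and hence both candidate maxima; you just need to choose $i$ as the row containing the overall largest error entry and $j$ as the column of the largest entry of $D-CA^+B$ (arbitrary if that block vanishes), as the paper does around equation~(\ref{eqnZD}). With that one-line amendment the proof is complete.
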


The following lemma is essential for the proof of the main theorem of this section.
\begin{lemma}\label{lemma3}
Let $p\leq q$ be two integers and  
\[W=\begin{bmatrix}A & b\end{bmatrix}P\in\R^{p,q+1}\] where $P$ is a permutation matrix such that $\vol(A) \geq \vol(W_i)$ for all submatrices $W_i\in\R^{p,q}$ of $W$ obtained by omitting the $i$-th column. Then
\[\vol(W) \leq \frac{1}{\sqrt{1-p/(q+1)}} \vol(A).\]
\end{lemma}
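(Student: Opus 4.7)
The plan is to interpret the volume via $\vol(M)^2 = \det(MM^T)$ for a $p \times k$ matrix with $p \leq k$, and then to exploit the rank-one update identity $W_i W_i^T = WW^T - w_i w_i^T$, where $w_i$ denotes the $i$-th column of $W$. Writing $G := WW^T \in \R^{p,p}$ and assuming first that $G$ is invertible (the case $\det G = 0$ makes the bound trivial since $\vol(W) = 0$), the matrix determinant lemma gives
\[
\vol(W_i)^2 = \det(G - w_i w_i^T) = \det(G)\,\bigl(1 - w_i^T G^{-1} w_i\bigr) = \vol(W)^2\,\bigl(1 - w_i^T G^{-1} w_i\bigr).
\]

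The next step is a simple trace computation: summing over all $q+1$ columns,
\[
\sum_{i=1}^{q+1} w_i^T G^{-1} w_i = \trace\!\left(G^{-1} \sum_{i=1}^{q+1} w_i w_i^T\right) = \trace\!\left(G^{-1} W W^T\right) = \trace(I_p) = p.
\]
Consequently, the average value of $w_i^T G^{-1} w_i$ over $i=1,\ldots,q+1$ equals $p/(q+1)$, and so the maximum of $\vol(W_i)^2$ satisfies
\[
\max_{i} \vol(W_i)^2 \ge \frac{1}{q+1}\sum_{i=1}^{q+1} \vol(W_i)^2 = \vol(W)^2\left(1 - \frac{p}{q+1}\right).
\]

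Finally, observe that $A$ itself is one of the submatrices $W_i$ (namely the one obtained by deleting whichever column of $W$ gets mapped to $b$ by the permutation), so the hypothesis $\vol(A) \ge \vol(W_i)$ for every $i$ says precisely that $\vol(A)^2 = \max_i \vol(W_i)^2$. Combining this with the averaging inequality yields $\vol(A)^2 \ge \vol(W)^2 (1 - p/(q+1))$, and rearranging gives the claim after noting that $p \leq q$ guarantees $1 - p/(q+1) > 0$.

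The only non-routine aspect is choosing the right representation; once one writes $\vol(W_i)^2 = \vol(W)^2 (1 - w_i^T G^{-1} w_i)$, the trace identity does all the work and the maximality hypothesis plugs in cleanly via the trivial inequality $\max \geq \text{average}$. There is no real obstacle beyond the degenerate case $\det G = 0$, which is handled by a one-line remark.
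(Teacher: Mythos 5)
Your proof is correct, and it reaches the same key identity as the paper --- namely $\sum_{i=1}^{q+1}\vol(W_i)^2=(q+1-p)\,\vol(W)^2$, followed by the observation that the maximal term dominates the average --- but it derives that identity by a genuinely different mechanism. The paper invokes the Cauchy--Binet theorem (citing Mikhalev and Oseledets): expanding $\det(WW^T)$ as the sum of $\det(W_S)^2$ over all $p$-subsets $S$ of columns and noting that each subset is omitted by exactly $q+1-p$ of the $W_i$ gives the identity in one combinatorial stroke, with no need to distinguish the rank-deficient case. Your route instead writes $W_iW_i^T=G-w_iw_i^T$ with $G=WW^T$, applies the matrix determinant lemma to get $\vol(W_i)^2=\vol(W)^2\bigl(1-w_i^TG^{-1}w_i\bigr)$, and then uses $\sum_i w_i^TG^{-1}w_i=\trace(G^{-1}G)=p$; this is self-contained and elementary (no appeal to Cauchy--Binet or an external lemma) at the modest cost of a separate one-line treatment of the degenerate case $\det G=0$, which you handle correctly since then $\vol(W)=0$. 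One cosmetic point: your claim that $\vol(A)^2=\max_i\vol(W_i)^2$ is true because $A$ is itself one of the $W_i$, but all you actually need is the stated hypothesis $\vol(A)\geq\vol(W_i)$ for every $i$, which already gives $\vol(A)^2\geq\max_i\vol(W_i)^2\geq$ the average. The argument is complete as written.
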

\begin{proof}
The Lemma is a direct consequence of the Cauchy-Binet theorem, see  
Mikhalev and Oseledets~\cite[Lemma 4.1]{MikO18}. Specifically, one gets
\[\det(WW^T) = \frac{1}{q+1-p} \sum_{i=1}^{q+1} \det(W_iW_i^T) \leq
  \frac{q+1}{q+1-p} \det(AA^T)\]
and using the definition of the volume one obtains the claimed bound.
\end{proof}

We now proceed with the proof of the theorem. First we prove the case $m=p+1$ $n=q+1$ and then we use this result to prove the bound for general $m>p$ and $n>q$.

\begin{proof}[Proof of Theorem~\ref{thm4}]
Consider the case $m=p+1$ and $n=q+1$. In this case it follows that $M$ has full rank $p+1=r+1$. As in this case
\[M = \begin{bmatrix}A & b \\ c^T & d \end{bmatrix}\]
for some vectors $b\in\R^p$, $c\in\R^q$ and a positive real number $d$, 
the pseudo skeleton approximation $\widehat{M}$ satisfies
\[ \widehat{M} = \begin{bmatrix}A & b \\ c^TA^+A & c^TA^+b \end{bmatrix}.\]
It follows that the Chebyshev norm of the error is
\begin{equation}\lVert \widehat{M} - M\rVert_C = \max \left(\lVert u \rVert_C,\,\lvert \gamma \rvert\right).\label{eqn14}\end{equation}
where $u=(I-AA^+)\,c$ and $\gamma=d-c^TA^+b$.

 From the definition of the adjugate of a matrix, we have
\[\det\left( M M^T\right) \left(MM^T\right)^{-1} = \adj \left(MM^T\right)\]
and taking the trace of each side, we obtain
\[\vol\left( M \right)^2 \left\|M^+\right\|_F^2
=\trace{\left( \adj \left(MM^T\right) \right)}.\]

It can be seen that the right hand side consists of the sum of the squares of the volumes of all submatrices of size $p\times(q+1)$ if $M$. As there are $r+1$ such submatrices, an application of Lemma~\ref{lemma3} gives
$$\trace{\left( \adj \left(MM^T\right) \right)} \leq \frac{(r+1)(q+1)}{q+1-r} \vol{A}^2.$$
Thus
\begin{equation}\vol\left( M \right)^2 \left\|M^+\right\|_F^2 \leq \frac{(r+1)(q+1)}{q+1-r} \vol{A}^2.\label{eqn15}\end{equation}

Then one gets
$$\begin{bmatrix}I & 0 \\ -c^TA^+ & 1\end{bmatrix} M =
  \begin{bmatrix} A & b \\ u^T & \gamma\end{bmatrix}$$
and, as the determinant of the first factor is one 
$$\vol(M) = \vol\left(\begin{bmatrix}A & b \\ u^T & \gamma\end{bmatrix}\right)$$
and hence
$$\vol(M)^2 = \det(MM^T)=\det\left(\begin{bmatrix}AA^T + bb^T & \gamma b \\ \gamma b^T & \lVert u \rVert_2^2+\gamma^2\end{bmatrix}\right).$$
Using the Schur complement of the lower right element of the block $2\times 2$ matrix gives
\begin{align*}
    \vol(M)^2 &= (\lVert u \rVert_2^2 + \gamma^2)) 
       \det\left(AA^T + \frac{\lVert u \rVert_2^2}{\lVert u \rVert_2^2+\gamma^2} bb^T\right).
\end{align*}
It follows that
\[ \vol(M)^2 \geq (\lVert u \rVert_2^2 + \gamma^2) \vol(A)^2.\]

Combining this inequality with equation~\ref{eqn15} gives 
\begin{align*}
    \lVert \widehat{M}-M\rVert_c^2 &= \max(\lVert u\rVert_C^2,\vert \gamma\rvert^2) \leq \lVert u\rVert_2^2 + \gamma^2 \\ &\leq \frac{\vol(M)^2}{\vol(A)^2} \leq \frac{(r+1)(q+1)}{q+1-r}\, \lVert M^+\rVert_F^{-2}.
\end{align*}
Introducing $\zeta_r$ provides the claimed bound.

We now consider the case of general $n$ and $m$.
  The error matrix of the pseudo skeleton approximation is
  $$\widehat{M}-M = -\begin{bmatrix} 0 & 0 \\ C(I-AA^+) & D - CA^+B\end{bmatrix}.$$
  Let $r+i$ be the index of the row of the error which contains the element with largest absolute value. Furthermore, let $j$ be the column index of of the column of $D-CA^+B$ which contains the largest absolute value. The Chebyshev norm of the error matrix is then
  $$\lVert \widehat{M}-M \rVert_C =
    \max \left\{\lVert c_i^T(I-AA^+)\rVert_C,\, \lvert d_{i,j}-c_i^T A^+ b_j\rvert\right\}$$
  where $c_i^T$ and $b_j$ are the $i$-th row of $C$ and $j$-th column of $B$, respectively.
  
  One can see that the right-hand side of the above error formula is just the Chebyshev norm of the error of the maximal volume rank $r$ pseudo skeleton approximation of the matrix 
  \begin{equation}  \label{eqnZD} 
  Z:=\begin{bmatrix} A & b_j \\ c_i^T & d_{i,j} \end{bmatrix},
  \end{equation}
  Now we apply the bound obtained for the special case of $m=p+1$ and $n=q+1$ to get
  \begin{equation} \label{eqnEB}
  \left\|\widehat{M}-M\right\|_C \le
   \sqrt{\frac{(r+1)(q+1)/(q+1-r)}{\sigma_1^{-2}(Z)+\cdots+\sigma_{r+1}^{-2}(Z)}}.
   \end{equation}
   By the interlacing theorem for singular values~\cite{Que87} one has $$\sigma_k(Z) \leq \sigma_k(M), \quad k=1,\ldots,r+1$$
   as $Z$ is a submatrix of $M$. The claimed bound then follows directly.
\end{proof}



A direct consequence is that for a large class of matrices one can obtain errors which up to a constant are of the same size as the errors of the singular value based approach.
\begin{corollary}\label{corr2thm4}
Let the matrix $M\in\R^{m,n}$ with $\rank M \geq r+1$ and let $\widehat{M}$ be the maximal volume pseudo-skeleton approximation with rank at most $r$ and parameters $p=r$ and $q$ as established in Definition~(\ref{def:PSA}). 
Furthermore, let $q=\alpha r-1$ for some rational $\alpha>1$ and let the singular values $\sigma_k$ of $M$ satisfy
\[ \frac{C_1}{k^{2s}} \leq \sigma_k^2 \leq \frac{C_2}{k^{2s}}\]
for some $s>0$ and $k=1,2,3,\ldots$. Then
\begin{equation}
  \left\|\widehat{M}-M\right\|_C^2 \le
  \frac{\alpha}{\alpha-1} \frac{C_2}{C_1} (2s+1)
  \, \sigma_{r+1}(M)^2.   
\end{equation}
\end{corollary}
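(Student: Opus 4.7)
The plan is to combine Theorem~\ref{thm4} with the decay hypothesis in essentially the same way that Corollary~\ref{corr1thm3} is derived from Theorem~\ref{thm3}, only now applied to the eigenvalues of $M^TM$ rather than those of $M$ itself.

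First I would apply Theorem~\ref{thm4} (squared) to obtain
\[
\|\widehat{M}-M\|_C^2 \;\leq\; \frac{\zeta_r(M^TM)}{1-r/(q+1)}\,\sigma_{r+1}(M)^2.
\]
Substituting the choice $q=\alpha r-1$ gives $q+1=\alpha r$ and hence $1-r/(q+1)=(\alpha-1)/\alpha$, so the leading prefactor is exactly $\alpha/(\alpha-1)$. This handles one of the two constants in the claimed inequality.

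Next I would bound $\zeta_r(M^TM)$. Since the eigenvalues of $M^TM$ are the squared singular values $\sigma_k^2$, and the hypothesis states $C_1 k^{-2s}\leq \sigma_k^2\leq C_2 k^{-2s}$, this is structurally the same situation as in the proof of Corollary~\ref{corr1thm3} but with the decay exponent $s$ there replaced by $2s$. Concretely I would write
\[
\zeta_r(M^TM) \;=\; \frac{(r+1)}{\sigma_{r+1}^2\bigl(\sigma_1^{-2}+\cdots+\sigma_{r+1}^{-2}\bigr)},
\]
use the upper bound $\sigma_{r+1}^{-2}\leq (r+1)^{2s}/C_1$ in the numerator and the lower bounds $\sigma_k^{-2}\geq k^{2s}/C_2$ in the denominator to get
\[
\zeta_r(M^TM)\;\leq\;\frac{C_2}{C_1}\,\frac{(r+1)^{2s+1}}{\sum_{k=1}^{r+1}k^{2s}},
\]
and then control the power sum by the standard integral comparison $\sum_{k=1}^{r+1}k^{2s}\geq (r+1)^{2s+1}/(2s+1)$. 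This yields $\zeta_r(M^TM)\leq (2s+1)C_2/C_1$.

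Finally I would multiply the two estimates together to reach the stated bound
\[
\|\widehat{M}-M\|_C^2 \;\leq\; \frac{\alpha}{\alpha-1}\cdot\frac{C_2}{C_1}(2s+1)\,\sigma_{r+1}(M)^2.
\]
I do not expect any real obstacle: the argument is a direct instantiation of Theorem~\ref{thm4} together with an elementary sum-to-integral estimate. The only point requiring a bit of care is remembering that the relevant eigenvalues are those of $M^TM$, so the effective decay rate entering $\zeta_r$ is $2s$ rather than $s$; this is what produces the factor $(2s+1)$ rather than $(s+1)$ one sees in the symmetric positive definite corollary.
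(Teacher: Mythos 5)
Your proposal is correct and follows essentially the same route as the paper: square the bound of Theorem~\ref{thm4}, note that $q=\alpha r-1$ makes the prefactor $\alpha/(\alpha-1)$, and bound $\zeta_r(M^TM)$ via the decay hypothesis on $\sigma_k^2$ together with the integral comparison $\sum_{k=1}^{r+1}k^{2s}\geq (r+1)^{2s+1}/(2s+1)$. The only cosmetic difference is that you unpack the harmonic mean explicitly rather than working with the $H(\cdot)$ notation, and your observation about the effective exponent being $2s$ is exactly why the paper's constant is $(2s+1)$ rather than $(s+1)$.
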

\begin{proof}
 The error bound of Theorem~\ref{thm4} has the three factors
 $1/\sqrt{1-r/(q+1)}$, $\sqrt{\zeta_r(M^TM)}$ and $\sigma_{r+1}(M)$. 
 The choice of $q$ ensures that the first factor is
 \[1/\sqrt{1-r/(q+1)} = 1/\sqrt{1-1/\alpha}.\]
 Taking the square gives the first factor of the bound of the corollary.
 
 Then we have for the squared second factor
 \begin{align*}
    \zeta_r(M^TM) &= \frac{H(\sigma_1^2,\ldots,\sigma_{r+1}^2)}{\sigma_{r+1}^2}\\
 &
 \leq \frac{C_2}{C_1}\frac{H(1,2^{-2s},\ldots,(r+1)^{-2s})}{(r+1)^{-2s}} \\
 &= \frac{C_2}{C_1}\frac{(r+1)^{2s+1}}{(1+2^{2s}+\cdots,+(r+1)^{2s})}\\
 &\leq \frac{C_2}{C_1}\frac{(r+1)^{2s+1}}{(r+1)^{2s+1}/(2s+1)} = \frac{C_2}{C_1}(2s+1).
 \end{align*}
 Here the last inequality is obtained by bounding the sum in the denominator by an integral. The claimed result follows.
\end{proof}

Using the same approach as in as Corollary~\ref{cor3} one gets
  \[\lVert\widehat{M}_r-M\rVert_C \leq 
    \left(\sqrt{\zeta_p(M^TM)}\frac{\sigma_{p+1}}{\sigma_{r+1}}+
    \frac{\sigma_{r+1}(\widehat{M})}{\sigma_{r+1}}\right) \sigma_{r+1}.
    \]
An application of Weyl's lemma provides
$\sigma_{r+1}(\widehat{M}) \leq  \sigma_{r+1}(M)+ \lVert \widehat{M}-M\rVert_2.$
The problem is that for large matrices the spectral norm can be very large compared to the Chebyshev norm, see~\cite{OsiZ18} and in particular Theorem~2 in that paper for more details so that Corollary~\ref{cor3} cannot be easily generalised to the non-symmetric case short of assuming a condition like $\sigma_{r+1}(\widehat{M}) \leq C\sigma_{r+1}$ for some $C>1$.

\section{Concluding Remarks}

Chebyshev matrix approximation is very useful as it provides error bounds for the matrix elements. Unlike in the case of $\ell_2$ approximation, little is known about the optimal errors (Chebyshev singular values) and there is no known exact algorithm. The commonly used error bounds for maximal volume pseudo skeleton approximations are both tight and practically useful for fast decaying singular values. However, they are less accurate and not useful in the case where the singular values decay slowly, e.g. where $\sigma_k = O(k^{-2})$ for small $s$. Continuing on our work in~\cite{HegdH21} we have presented new improved error bounds and in particular demonstrated their improvement for matrices with slowly decaying singular values. 

In~\cite{GorT11} the authors have presented a new algorithm which combines a pseudo skeleton approximation with truncated singular value decomposition of this approximation. As noted previously, they show that this new method provides approximations which are up to a constant as good as the one based on the truncated singular value decomposition. The method is based on computing a pseudo skeleton approximation with maximal $r$-volume. Here we present a similar approach but use a traditional maximal volume pseudo skeleton approximation instead. We provide the new improved error bounds for this method which, at least in the symmetric positive definite case, has similar properties to those one proposed in˜\cite{GorT11}. We also provide bounds of this method for the non-positive definite case but they do depend on the singular values of the pseudo skeleton approximations. Further research may further clarify this case.

The bounds discussed so far have been all in terms of the singular values which are the bounds of best approximations using Schatten norms. A next step would be bounds based on best Chebyshev approximations. A good first collection of algorithms and results for pseudo-skeleton matrix approximations has been published in~\cite{ZMT22} during the revision of this manuscript. An extension of the research in pseudo-skeleton approximation has considered the approximation of tensors, see~\cite{Gor08}.

\printbibliography

\end{document}